\documentclass[reqno]{amsart}

\usepackage[T1]{fontenc}
\usepackage{lmodern}
\usepackage{amsmath,amssymb,amsthm}
\usepackage{array}
\usepackage{enumitem}
\usepackage{microtype}
\usepackage{cite}
\usepackage{needspace}
\usepackage[hidelinks]{hyperref}
\usepackage{tikz}
\usetikzlibrary{positioning}

\makeatletter
\@namedef{subjclassname@2020}{\textup{2020} Mathematics Subject Classification}
\makeatother

\numberwithin{equation}{section}

\newcommand{\B}{\mathcal B_2^1}
\newcommand{\Id}{\operatorname{Id}}
\newcommand{\con}{\operatorname{con}}

\newtheorem{theorem}{Theorem}[section]
\newtheorem{lemma}[theorem]{Lemma}
\newtheorem{proposition}[theorem]{Proposition}
\theoremstyle{remark}

\begin{document}

\title[The six-element Brandt ai-semiring]
{The six-element additively idempotent semiring $B_2^1$ is inherently nonfinitely based}

\author[M. M. Ren]{Miao Miao Ren}
\address{School of Mathematics\\
Northwest University\\Xi'an 710127\\Shaanxi\\P.R. China}
\email{miaomiaoren@yeah.net}

\author[M. Y. Yue]{Meng Ya Yue}
\address{School of Mathematics\\
Northwest University\\Xi'an 710127\\Shaanxi\\P.R. China}
\email{myayue@yeah.net}

\author[Y. L. Zhou]{Yi Lin Zhou}
\address{School of Mathematics\\
Northwest University\\Xi'an 710127\\Shaanxi\\P.R. China}
\email{zhou990619@163.com}

\subjclass[2020]{16Y60, 03C05}
\keywords{Additively idempotent semiring, Brandt monoid, finite basis problem,
inherently nonfinitely based, Zimin word}

\hypersetup{
  pdftitle={The six-element Brandt ai-semiring is inherently nonfinitely based},
  pdfauthor={Miao Miao Ren, Meng Ya Yue, and Yi Lin Zhou},
  pdfsubject={Finite basis problem for additively idempotent semirings},
  pdfkeywords={additively idempotent semiring, Brandt monoid, finite basis problem, Zimin word}
}

\begin{abstract}
We show that the six-element additively idempotent semiring $B_2^1$ is inherently
nonfinitely based. More precisely, for every positive integer $n$, we
construct an infinite additively idempotent semiring, generated by at most $(6n+8)^2$
elements, that satisfies every $n$-variable identity of $B_2^1$.
\end{abstract}

\maketitle

\section{Introduction}\label{sec:introduction}

An \emph{additively idempotent semiring}, or \emph{ai-semiring} for
short, is an algebra $(S,+,\cdot)$ such that $(S,+)$ is a commutative
idempotent semigroup, $(S,\cdot)$ is a semigroup, and multiplication
distributes over addition from both sides:
\[
x(y+z)\approx xy+xz
\quad\text{and}\quad
(x+y)z\approx xz+yz.
\]
We work throughout in the signature consisting of the two binary
operations $+$ and $\cdot$. General background on semirings and their
applications can be found in~\cite{Glazek2001,Golan1992}.

A finite algebra is \emph{inherently nonfinitely based} if it belongs to
no finitely based locally finite variety of the same signature.  This
notion arose from the study of hereditary finite basability of finite
semigroups; see Sapir and Volkov~\cite{SapirVolkov1988}.  In the semiring setting, Dolinka constructed finite nonfinitely
based examples~\cite{Dolinka2007} and subsequently obtained a general
class of inherently nonfinitely based semirings~\cite{Dolinka2009}.
The finite basis problem for ai-semirings has since been studied from
several directions, including flat extensions, power semirings,
endomorphism semirings and small finite ai-semirings; see, for example,
\cite{DGV2024,dgv,JacksonRenZhao2022,RenJacksonZhaoLei2023,
WuRenZhao2024,ZhaoEtAl2020,ZhaoWu2025}.

Let $E_{ij}$ denote the $2\times2$ Boolean matrix unit. The
six-element Brandt monoid is
\[
 B_2^1=\{0,1,c,d,cd,dc\},
 \qquad
 c=E_{12},\quad d=E_{21},
\]
where $cd=E_{11}$, $dc=E_{22}$, and multiplication is Boolean matrix
multiplication. It admits a unique compatible semilattice addition. We
denote the resulting ai-semiring by $\B$; its addition and the notation
for its elements are described explicitly in Section~2. The
multiplicative reduct of $\B$ is the six-element Brandt monoid.  Perkins~\cite{Perkins1969} proved that this
monoid is nonfinitely based.  Volkov~\cite{Volkov2021} proved that the
identities of $\B$ admit no basis involving only a bounded number of
variables, and therefore that $\B$ is nonfinitely based.  The same
finite basis problem was also treated by Jackson, Ren and
Zhao~\cite{JacksonRenZhao2022}.  The question whether $\B$ is
inherently nonfinitely based was formulated there and was restated by
Dolinka, Gusev and Volkov~\cite{dgv}.

Our main result answers this question affirmatively.  For every
$n\geq1$ we construct an infinite ai-semiring $T_n$, generated by at
most $(6n+8)^2$ elements, that satisfies every identity of $\B$ in at
most $n$ variables.  The multiplicative reduct of $T_n$ is based on a
factor semigroup used by Auinger, Dolinka and Volkov~\cite{ADV2012}.
The proof combines the minimality of Zimin words with a return property
of the factor language and a prefix-code reduction.

For an algebra $S$, let $\Id_{\leq n}(S)$ denote the set of all
identities of $S$ whose two sides together involve at most $n$
distinct variables.  The main theorem is stated in Section~7 after the
necessary construction has been established.

\section{Preliminaries and the six-element algebra}

In this section, we fix the notation and terminology used throughout
the paper. We refer to~\cite{BurrisSankappanavar1981,KurilPolak2005}
for the standard universal-algebraic and semilattice-ordered semigroup
background.

Let $X$ be a countably infinite alphabet. Let $X^+$ be the free
semigroup of nonempty words,
and let $X^*=X^+\cup\{\varepsilon\}$ be the free monoid. A \emph{factor}
always means a contiguous factor. Word substitutions are nonerasing:
each variable is replaced by a nonempty word. Empty words are permitted
as contexts only, never as terms or substitution images.

The \emph{content} $\con(P)$ of a term $P$ is its set of variables.
For terms or words $P,Q$ and an ai-semiring $S$, put
\[
 S\models P\preccurlyeq Q
 \quad\Longleftrightarrow\quad S\models P+Q\approx Q.
\]
For elements the same convention defines the additive order.
For example, $u(s+t)=ut$ follows from $s+t=t$, so multiplication on
either side preserves this order.

The elements of $B_2^1$ may be represented by the Boolean matrices
\[
\begin{array}{c@{\qquad}c@{\qquad}c}
0=\begin{pmatrix}0&0\\0&0\end{pmatrix}&
1=\begin{pmatrix}1&0\\0&1\end{pmatrix}&
c=\begin{pmatrix}0&1\\0&0\end{pmatrix}\\[7pt]
d=\begin{pmatrix}0&0\\1&0\end{pmatrix}&
cd=\begin{pmatrix}1&0\\0&0\end{pmatrix}&
dc=\begin{pmatrix}0&0\\0&1\end{pmatrix}.
\end{array}
\]
Multiplication is Boolean matrix multiplication, while addition is the
intersection of matrix supports. Thus $0$ is the zero of multiplication
and the greatest element of the additive order, whereas $1$ is the
identity of multiplication and is not an additive identity. As usual,
\[
 x\leq y\quad\Longleftrightarrow\quad x+y=y.
\]
Consequently, the additive order is reverse inclusion of supports and
is displayed in Figure~\ref{fig:additive-order}.

\begin{figure}[ht]
\centering
\begin{tikzpicture}[
  x=1.2cm,y=1.15cm,
  element/.style={font=\normalsize,inner sep=2pt,fill=white}
]
\coordinate (top) at (0,2.7);
\coordinate (bot) at (0,0);
\coordinate (c0)  at (-2.7,1.35);
\coordinate (cd0) at (-0.9,1.35);
\coordinate (dc0) at (0.9,1.35);
\coordinate (d0)  at (2.7,1.35);
\draw (bot)--(cd0) (bot)--(dc0);
\draw (c0)--(top) (cd0)--(top) (dc0)--(top) (d0)--(top);
\node[element] at (top) {$0$};
\node[element] at (c0) {$c$};
\node[element] at (cd0) {$cd$};
\node[element] at (dc0) {$dc$};
\node[element] at (d0) {$d$};
\node[element] at (bot) {$1$};
\end{tikzpicture}
\caption{The additive order of $\mathcal B_2^1$.}
\label{fig:additive-order}
\end{figure}

For completeness, the two operations are recorded in
Table~\ref{tab:cayley}. In each table, the first column specifies the
left operand and the first row the right operand. Thus the entry in row
$x$ and column $y$ is $x+y$ in the left-hand table and $xy$ in the
right-hand table.
\begin{table}[ht]
\caption{The Cayley tables of $\mathcal B_2^1$}
\label{tab:cayley}
\centering
\(
\begin{array}{c|cccccc}
+&0&1&c&d&cd&dc\\\hline
0&0&0&0&0&0&0\\
1&0&1&0&0&cd&dc\\
c&0&0&c&0&0&0\\
d&0&0&0&d&0&0\\
cd&0&cd&0&0&cd&0\\
dc&0&dc&0&0&0&dc
\end{array}
\qquad
\begin{array}{c|cccccc}
\cdot&0&1&c&d&cd&dc\\\hline
0&0&0&0&0&0&0\\
1&0&1&c&d&cd&dc\\
c&0&c&0&cd&0&c\\
d&0&d&dc&0&d&0\\
cd&0&cd&c&0&cd&0\\
dc&0&dc&0&d&0&dc
\end{array}
\)
\end{table}

\Needspace{8\baselineskip}
\begin{lemma}\label{lem:matrix}
The operations above make $\B$ an ai-semiring. Its additive order is
the order in Figure~\ref{fig:additive-order}.
\end{lemma}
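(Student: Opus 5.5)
The plan is to avoid checking the Cayley tables entry by entry and instead use the matrix model. Identify each element of $B_2^1$ with its support, a subset of $\{1,2\}\times\{1,2\}$. Under this identification multiplication is the Boolean matrix product, $(xy)_{ik}=\bigvee_j x_{ij}\wedge y_{jk}$, and addition is the entrywise meet, $(x+y)_{ij}=x_{ij}\wedge y_{ij}$. A direct look at Table~\ref{tab:cayley} confirms that both tables are exactly these operations.

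First I will show that both operations are well defined on the six-element set. Closure of $B_2^1$ under Boolean multiplication is the standard fact that it is the Brandt monoid; it can be read off the right-hand table. For addition, a subset of the support of one of the six matrices is again one of the six matrices:
\begin{itemize}
\item the subsets of $\operatorname{supp}(1)=\{(1,1),(2,2)\}$ are $1$, $cd$, $dc$ and $0$;
\item the subsets of a one-point support are the matrix itself and $0$.
\end{itemize}
Hence $x+y$, whose support is contained in $\operatorname{supp}(x)$, stays in the set. Entrywise meet is clearly associative, commutative and idempotent, and Boolean matrix multiplication is associative. So $(B_2^1,+)$ is a semilattice and $(B_2^1,\cdot)$ is a semigroup.

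The key step, and the only one needing an idea, is distributivity, since Boolean matrix product does not distribute over entrywise meet in general. Here we use that every element of $B_2^1$ has at most one $1$ in each row and at most one $1$ in each column.
\begin{itemize}
\item For left distributivity, fix $i,k$. If row $i$ of $x$ is zero, then row $i$ of $x(y+z)$, of $xy$ and of $xz$ are all zero. Otherwise row $i$ of $x$ has its unique $1$ at some column $j$, and
\[
(x(y+z))_{ik}=y_{jk}\wedge z_{jk}=(xy)_{ik}\wedge(xz)_{ik}=(xy+xz)_{ik}.
\]
\item Right distributivity $(x+y)z=xz+yz$ is the symmetric argument, using that each column of $z$ has at most one $1$.
\end{itemize}
This gives both distributive laws, so $B_2^1$ is an ai-semiring.

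For the order, $x\le y$ means $x+y=y$, that is, $\operatorname{supp}(x)\cap\operatorname{supp}(y)=\operatorname{supp}(y)$, that is, $\operatorname{supp}(y)\subseteq\operatorname{supp}(x)$. So the additive order is reverse inclusion of supports. Listing the supports gives the picture:
\begin{itemize}
\item $0$ has empty support, so it is the top;
\item $c,d,cd,dc$ have one-point supports and are pairwise incomparable;
\item $1$ has support $\{(1,1),(2,2)\}$, which contains exactly those of $cd$ and $dc$, so $1$ lies below $cd$ and $dc$ only.
\end{itemize}
This is exactly Figure~\ref{fig:additive-order}. I expect no real obstacle beyond spotting the row/column observation that makes distributivity work.
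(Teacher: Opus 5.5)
Your proof is correct and follows the same route as the paper. You identify elements with their supports, use the partial-bijection shape (at most one $1$ per row and per column) to get both distributive laws, and read off the additive order as reverse inclusion of supports; you just spell out the entrywise distributivity computation and the closure of addition that the paper leaves to the Cayley tables, with the minor quibble that $1$ also lies below $0$, so ``below $cd$ and $dc$ only'' should be read as naming its upper covers.
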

\begin{proof}
Boolean matrix multiplication is associative, and intersection of
supports is associative, commutative and idempotent. Since the six
supports are graphs of partial bijections, Boolean matrix multiplication
distributes over their intersection on both sides. The displayed tables
show that both operations are closed on $B_2^1$. Finally,
\[
 x+y=y
 \quad\Longleftrightarrow\quad
 \operatorname{supp}(y)\subseteq\operatorname{supp}(x),
\]
which gives precisely the order in Figure~\ref{fig:additive-order}.
\end{proof}

\begin{lemma}\label{lem:normal}
Every term in the signature $\{+,\cdot\}$ is, in every ai-semiring,
equal to a finite nonempty sum of nonempty words. The union of the
contents of these words is the content of the original term.
\end{lemma}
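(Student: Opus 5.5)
We argue by structural induction on terms built from variables using $+$ and $\cdot$, proving simultaneously that each term $P$ equals, in every ai-semiring, a sum $w_1+\cdots+w_k$ with $k\geq1$ and each $w_i\in X^+$, and that $\con(w_1)\cup\cdots\cup\con(w_k)=\con(P)$. The only laws used are the ai-semiring axioms. We use associativity and commutativity of $+$ to ignore bracketing and order in sums, and associativity of $\cdot$ to treat products of variables as words. Since the resulting equalities are derived from identities holding in every ai-semiring, they hold in every ai-semiring.

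\emph{Base case.} A variable $x$ is itself a one-term sum of the one-letter word $x$, and its content is $\{x\}$.

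\emph{Sum step.} Suppose $P=Q+R$, where $Q=\sum_{i}u_i$ and $R=\sum_{j}v_j$ by the induction hypothesis. Then $P=\sum_i u_i+\sum_j v_j$ is again a finite nonempty sum of nonempty words. By additive idempotency we may delete repeated summands if we wish, and this does not change the union of contents. That union is $\con(Q)\cup\con(R)=\con(P)$.

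\emph{Product step.} Suppose $P=QR$. Applying left distributivity and then right distributivity repeatedly (formally, a secondary induction on the number of summands, using $x(y+z)\approx xy+xz$ and $(x+y)z\approx xz+yz$) gives
\[
 \Bigl(\sum_i u_i\Bigr)\Bigl(\sum_j v_j\Bigr)=\sum_{i,j}u_iv_j .
\]
Each $u_iv_j$ is a nonempty word, since concatenation of nonempty words is nonempty, and the index set is nonempty. Moreover $\con(u_iv_j)=\con(u_i)\cup\con(v_j)$. Because both index sets are nonempty, the union over all pairs $(i,j)$ equals $\bigcup_i\con(u_i)\cup\bigcup_j\con(v_j)=\con(Q)\cup\con(R)=\con(P)$.

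The only point needing care is in the product step: the content claim requires both sums to be nonempty. This is exactly why nonemptiness is carried along in the induction. Beyond that, the argument is routine bookkeeping.
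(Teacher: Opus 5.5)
Your proof is correct and follows the paper's argument essentially step for step. The paper also uses structural induction: a variable is a one-word sum, $P+Q$ takes the union of the two summand sets, and $PQ$ takes all products $u_iv_j$ via the two distributive laws, with content tracked by unions and nonemptiness carried along so the product step works.
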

\begin{proof}
Use induction on the term. A variable gives the singleton set consisting
of its one-letter word. For $P+Q$, take the union of the two finite
nonempty sets obtained inductively. For $PQ$, take all concatenations
$uv$, with $u$ from the first set and $v$ from the second. Both sets
remain finite and nonempty; associativity, commutativity and idempotence
justify the union step, and the two distributive laws justify the
concatenation step. No empty word is introduced. The statement about
content follows in each case by taking unions of the variable sets.
\end{proof}

\begin{lemma}\label{lem:regular}
Every identity of $\B$ is regular: its two sides have the same content.
\end{lemma}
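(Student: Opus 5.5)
The plan is to prove the contrapositive. Suppose $P\approx Q$ is an identity in which some variable $x$ occurs in $P$ but not in $Q$; we will exhibit an assignment into $\B$ that separates the two sides.

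By Lemma~\ref{lem:normal}, we may write
\[
P=\sum_{i} u_i \quad\text{and}\quad Q=\sum_j v_j
\]
with nonempty words $u_i, v_j$. Moreover, $x$ lies in $\con(u_i)$ for some $i$, and $x\notin\con(v_j)$ for every $j$.

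\textbf{The assignment.} Send $x\mapsto 0$ and every other variable $y\mapsto 1$.

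\textbf{Evaluating $Q$.} Every word $v_j$ avoids $x$, so it is a product of copies of $1$ and evaluates to $1$. Hence $Q$ evaluates to $1+1=1$; the empty sum cannot occur, since the sums from Lemma~\ref{lem:normal} are nonempty.

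\textbf{Evaluating $P$.} The word $u_i$ contains $x$, and $0$ is a multiplicative zero, so $u_i$ evaluates to $0$. Since $0$ is the greatest element of the additive order, it is absorbing for $+$; Table~\ref{tab:cayley} confirms that $0+z=0$ for all $z$. Therefore $P$ evaluates to $0$.

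\textbf{Conclusion.} Since $0\neq 1$, the identity $P\approx Q$ fails in $\B$. By symmetry, the case where a variable occurs in $Q$ but not in $P$ is handled the same way. Hence every identity of $\B$ has $\con(P)=\con(Q)$.

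There is no real obstacle here. The only point needing care is that content is preserved by the normal form, and Lemma~\ref{lem:normal} guarantees this, so the assignment argument can be applied to the words rather than to the original terms.
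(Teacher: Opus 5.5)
Your proof is correct and uses the same separating assignment as the paper ($x\mapsto 0$, every other variable $\mapsto 1$). The only difference is how the two sides are evaluated: the paper uses structural induction on terms, based on the fact that $\{0,1\}$ is closed under both operations and each returns $1$ exactly when both operands are $1$, while you evaluate through the normal form of Lemma~\ref{lem:normal}; both are sound.
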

\begin{proof}
The subset $\{0,1\}$ is closed under both operations. Either operation
returns $1$ exactly when both operands are $1$. For every term $P$ and
assignment $\nu$ into this subset, structural induction gives
\[
 \nu(P)=1\quad\Longleftrightarrow\quad
 \nu(x)=1\text{ for every }x\in\con(P).
\]
The claim is immediate for a variable. If $P=U+V$ or $P=UV$, its value
is $1$ exactly when both subterms have value $1$; their induction
hypotheses yield precisely the condition for
$\con(P)=\con(U)\cup\con(V)$.
If $x\in\con(P)\setminus\con(Q)$, assign $x$ to $0$ and every other
variable to $1$. The values of $P,Q$ are $0,1$, respectively, so
$P\approx Q$ cannot be an identity. Interchanging the two terms rules
out the other set difference.
\end{proof}

\section{Minimality of Zimin words}

A nonempty word $w$ is \emph{minimal} for an ai-semiring $S$ if
$S\models v\preccurlyeq w$ for a nonempty word $v$ implies the literal
word equality $v=w$. Define the Zimin words by
\[
 Z_1=x_1,\qquad Z_{m+1}=Z_mx_{m+1}Z_m\quad(m\ge1).
\]

\Needspace{4\baselineskip}
\begin{lemma}[{\cite[Lemma~2]{dgv}}]\label{lem:zimin}
Every Zimin word is minimal for $\B$.
\end{lemma}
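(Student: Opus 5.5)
We argue by induction on $m$. Suppose $v$ is a nonempty word with $\B\models v\preccurlyeq Z_m$, that is, $\B\models v+Z_m\approx Z_m$. In every assignment $\nu$ into $B_2^1$ we then have $\nu(v)\le\nu(Z_m)$ in the order of Figure~\ref{fig:additive-order}. In particular, whenever $\nu(Z_m)\neq 0$, the element $\nu(v)$ has support containing that of $\nu(Z_m)$. Since all supports are graphs of partial bijections, this forces either $\nu(v)=\nu(Z_m)$, or $\nu(Z_m)\in\{cd,dc\}$ and $\nu(v)=1$. The whole argument consists of choosing assignments $\nu$ with $\nu(Z_m)\ne0$ and reading off structural information about $v$.

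The first step is content. Assigning $1$ to all variables except one $x$, and sending $x$ to $cd$, gives $\nu(Z_m)=cd$ when $x\in\con(Z_m)$. If $x\notin\con(v)$ we would get $\nu(v)=1\neq cd$, which is still allowed, so this test is too weak. Instead, send $x$ to $c$: then $\nu(Z_m)$ is $c$ if $x$ occurs once and $0$ otherwise. This pins down the letter $x_m$, which occurs exactly once in $Z_m$: it must also occur exactly once in $v$, since $\nu(v)=c$ requires an odd-free single occurrence. Moreover, each $x\notin\con(Z_m)$ cannot occur in $v$, because sending it to $0$ would give $\nu(v)=0>\nu(Z_m)$.

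The next step is to show that every variable of $Z_m$ occurs in $v$ and that the factors are correct. For this we use the standard $c/d$ path assignments. Write $Z_m=Z_{m-1}x_mZ_{m-1}$ and $v=v_1x_mv_2$. Map $x_m\mapsto c$ and, for a chosen substitution $\mu$ of the remaining letters into $\{1,cd,dc,d,\dots\}$, make $\nu(Z_{m-1})=dc$ or similar so that $\nu(Z_m)=\nu(Z_{m-1})\,c\,\nu(Z_{m-1})\neq0$. Comparing values then yields $\B\models v_1\preccurlyeq Z_{m-1}$-type inequalities. More precisely, one wants $v_1$ and $v_2$ to each satisfy $\nu(v_i)\le\nu(Z_{m-1})$ for all assignments of $x_1,\dots,x_{m-1}$ with $\nu(Z_{m-1})\in\{1,cd,dc\}$ plus the boundary letters. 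Because $c=E_{12}$ isolates the $(1,1)$ entry on the left and the $(2,2)$ entry on the right, the order relation transfers componentwise. One must also rule out $v_1$ or $v_2$ being empty, using an assignment with $x_m\mapsto c$ and the remaining letters making $Z_{m-1}\mapsto d$ on both sides. That assignment is not directly available, so I would handle it by setting $x_1\mapsto d$ and checking parity. Once $\B\models v_i\preccurlyeq Z_{m-1}$ for nonempty $v_i$ (after the transfer), the induction hypothesis gives $v_1=v_2=Z_{m-1}$, hence $v=Z_m$.

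The main obstacle is this transfer step: deducing from $\nu(v)\le\nu(Z_m)$ for all $\nu$ a genuine inequality $v_i\preccurlyeq Z_{m-1}$ valid for all assignments, including those with $\nu(Z_{m-1})=0$, which are trivial, and those with values $c$ or $d$. I would first try to realize an arbitrary pair of values via a matrix trick: replace $x_m$ by $c$ and conjugate, so that the $(1,2)$ entry of $\nu(Z_m)$ equals the product of the $(1,1)$ entry of $\nu(v_1)$-like data. If this proves messy, the fallback is simply to invoke the argument of \cite[Lemma~2]{dgv} as cited, which is what the paper does.
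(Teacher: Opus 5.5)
There is a genuine gap: the decisive step of your induction is never carried out. The paper gives no argument of its own here; it only cites \cite[Lemma~2]{dgv}. Your preliminary observations are correct. The relation $\nu(v)\le\nu(Z_m)$ means $\operatorname{supp}\nu(Z_m)\subseteq\operatorname{supp}\nu(v)$. The test $x\mapsto 0$ gives $\con(v)\subseteq\con(Z_m)$. The test $x_m\mapsto c$, with all other letters sent to $1$, shows that $x_m$ occurs exactly once in $v$. Hence $v=v_1x_mv_2$ with $\con(v_1v_2)\subseteq\{x_1,\dots,x_{m-1}\}$, and the inductive skeleton is the natural one.

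The missing step is deriving $\B\models v_i\preccurlyeq Z_{m-1}$ with $v_i$ nonempty, and the concrete hints you give for it are wrong:
\begin{itemize}
\item With $x_m\mapsto c$ one has $dc\cdot c\cdot dc=0$, so the choice $\nu(Z_{m-1})=dc$ is useless.
\item In a product $AcB$ the $(i,j)$ entry is $A_{i1}B_{2j}$. So $c$ selects the first column of the left factor and the second row of the right factor, not the $(1,1)$ and $(2,2)$ entries.
\item The assignment making $Z_{m-1}\mapsto d$ is directly available: take $x_1\mapsto d$ and $x_j\mapsto c$ for $j\ge2$, using $dcd=d$. By contrast, $x_1\mapsto d$ with all other letters at $1$ gives $d^{2^{m-2}}=0$ for $m\ge3$.
\end{itemize}
Falling back on the citation means the proposal, as it stands, proves nothing beyond the preliminary steps.

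The missing idea is to choose $\nu(x_m)$ depending on $a=\mu(Z_{m-1})$, where $\mu$ is an arbitrary assignment of $x_1,\dots,x_{m-1}$. This is legitimate because $x_m$ occurs in none of $v_1,v_2,Z_{m-1}$.
\begin{itemize}
\item \emph{Nonemptiness.} Take $\mu$ with $a=d$ and put $\nu(x_m)=c$, so $\nu(Z_m)=d$. An empty $v_1$ would give $\nu(v)=c\,\mu(v_2)$, whose support lies in row $1$ and so misses $(2,1)$. Symmetrically, an empty $v_2$ gives $\nu(v)=\mu(v_1)c$, whose support lies in column $2$.
\item \emph{Transfer.} For general $\mu$, put $\nu(x_m)=a^{-1}$, the inverse in the Brandt monoid ($c\leftrightarrow d$, with $cd$ and $dc$ fixed). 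Then $\nu(Z_m)=aa^{-1}a=a$.
\item If $a\in\{c,d,cd,dc\}$, its support is a single pair $(i,j)$, and $a^{-1}$ is the single arrow $j\mapsto i$. So $(i,j)\in\operatorname{supp}(\mu(v_1)\,a^{-1}\mu(v_2))$ forces the path $i\to j\to i\to j$. Thus $(i,j)$ lies in the supports of both $\mu(v_1)$ and $\mu(v_2)$, which is exactly $\mu(v_1),\mu(v_2)\le a$.
\item If $a=1$, every letter $x_1,\dots,x_{m-1}$ is sent to $1$, hence $\mu(v_1)=\mu(v_2)=1$. If $a=0$ there is nothing to check.
\end{itemize}
Add the base case: $v=x_1^k$, and $x_1\mapsto c$ forces $k=1$. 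The induction hypothesis then gives $v_1=v_2=Z_{m-1}$, hence $v=Z_m$.
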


\begin{lemma}\label{lem:factor}
In any ai-semiring, every nonempty factor of a minimal word is minimal.
\end{lemma}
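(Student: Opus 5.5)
We argue by contraposition. Let $w$ be minimal for an ai-semiring $S$ and write $w=pfq$ with $p,q\in X^*$ and $f$ a nonempty factor. Suppose $v$ is a nonempty word with $S\models v\preccurlyeq f$, that is, $S\models v+f\approx f$. We must show that $v=f$ as words. The plan is to embed this inequality into $w$ using the compatibility of the order with multiplication, and then invoke minimality of $w$.

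The first step is to multiply by the context. As noted in Section~2, $u(s+t)=ut$ whenever $s+t=t$, and symmetrically on the right. Applying this under every assignment gives $S\models pvq+pfq\approx pfq$, which is $S\models pvq\preccurlyeq w$. When $p$ or $q$ is empty we simply omit the corresponding multiplication; this is needed because $S$ need not have an identity, and the convention is that empty words occur only as contexts. The word $pvq$ is nonempty because $v$ is.

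Next we apply minimality of $w$ and cancel. Minimality gives the literal equality $pvq=pfq$ in $X^+$. Since the free monoid is cancellative, comparing prefixes of length $|p|$ and suffixes of length $|q|$ yields $v=f$. Hence $f$ is minimal.

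There is no real obstacle here. The only points that need care are that the order is preserved by multiplication, which follows from distributivity, and that empty contexts are handled without assuming an identity in $S$.
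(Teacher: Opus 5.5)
Your proof is correct and follows essentially the same route as the paper's: you multiply $S\models v\preccurlyeq f$ by the possibly empty contexts via distributivity, apply minimality of $w$ to obtain the literal equality $pvq=pfq$, and cancel in the free monoid to get $v=f$. The only quibble is that you call the argument a contraposition when it is in fact direct, which is harmless.
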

\begin{proof}
Write the minimal word as $W=\alpha t\beta$, with $t$ nonempty
and $\alpha,\beta$ possibly empty. If $S\models s\preccurlyeq t$,
distributivity gives $S\models us\preccurlyeq ut$ and
$S\models su\preccurlyeq tu$ for a nonempty word $u$; for example,
$us+ut=u(s+t)=ut$. Apply these implications on each nonempty
context side, omitting multiplication on an empty side. It follows
that $S\models\alpha s\beta\preccurlyeq\alpha t\beta$.
Minimality gives the literal equality
$\alpha s\beta=\alpha t\beta$. Cancellation in the free monoid,
not in the algebra $S$, gives $s=t$.
\end{proof}

\section{Sapir's factor semigroup and a return property}

Fix $k\ge1$, put $r=6k+2$, and let
\[
 A_k=\{a_{ij}:1\le i,j\le r\}.
\]
Define a morphism $\gamma:A_k^+\longrightarrow A_k^+$ by
\begin{equation}\label{eq:gamma}
 \gamma(a_{ij})=a_{i1}a_{j2}a_{i3}a_{j4}\cdots a_{i,r-1}a_{j,r}.
\end{equation}
Let $L_k$ be the set of all nonempty factors of $\gamma^m(a_{11})$
for $m\ge1$. It is a \emph{factorial} language: every nonempty
factor of one of its words is again in it. Adjoin a new symbol $0$
and define
\begin{equation}\label{eq:product}
 p\cdot q=\begin{cases}
 pq,&p,q\in L_k\text{ and }pq\in L_k,\\
 0,&\text{otherwise}.
 \end{cases}
\end{equation}
The resulting semigroup is denoted by $V_k^0$.

\begin{lemma}\label{lem:factorproduct}
The product in~\eqref{eq:product} is associative. A finite nonempty
product of nonzero words is their literal concatenation if this belongs
to $L_k$, and is zero otherwise. Any product with a zero factor is zero.
\end{lemma}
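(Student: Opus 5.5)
The plan is to prove the second assertion first, by induction on the number of factors, and then read off associativity and the zero statement from it. The key input is that $L_k$ is factorial: any nonempty factor of a word of $L_k$ lies in $L_k$. Only this property of $L_k$ is used, not the specific morphism $\gamma$.

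The zero statement comes first, since it is immediate from~\eqref{eq:product}. If either operand is $0$, the first case of the definition does not apply, so the product is $0$. A straightforward induction on the bracketing then shows that any product, however parenthesised, containing a factor $0$ equals $0$.

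For the main claim, let $p_1,\dots,p_n\in L_k$ with $n\ge1$, and fix an arbitrary bracketing of $p_1\cdots p_n$. I will show by induction on $n$ that its value is the concatenation $p_1p_2\cdots p_n$ when that word lies in $L_k$, and is $0$ otherwise.
\begin{itemize}
\item The case $n=1$ is trivial.
\item For $n\ge2$, the outermost multiplication splits the bracketing as $U\cdot V$. Here $U$ is a bracketing of $p_1\cdots p_j$ and $V$ is a bracketing of $p_{j+1}\cdots p_n$, for some $1\le j<n$.
\item Suppose first that $w=p_1\cdots p_n\in L_k$. The words $p_1\cdots p_j$ and $p_{j+1}\cdots p_n$ are nonempty factors of $w$, so both lie in $L_k$ by factoriality. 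By induction $U$ and $V$ evaluate to these two words. Their concatenation is $w\in L_k$, so the first case of~\eqref{eq:product} gives $U\cdot V=w$.
\item Suppose instead that $w\notin L_k$. If $U$ or $V$ evaluates to $0$, then $U\cdot V=0$ by the zero statement. Otherwise, by induction, both evaluate to their literal concatenations, which are in $L_k$. Their concatenation is $w\notin L_k$, so the second case of~\eqref{eq:product} gives $0$.
\end{itemize}

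Associativity now follows by taking $n=3$. For nonzero $p,q,s$, both $(pq)s$ and $p(qs)$ equal the word $pqs$ if it lies in $L_k$, and equal $0$ otherwise. If any of the three arguments is $0$, both sides are $0$ by the zero statement.

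The only real point is the factoriality step: knowing $w\in L_k$ forces both partial products to be nonzero. Everything else is bookkeeping. One should also note that $0$ is a new symbol, so nonzero values are genuine words and no clash with the empty word or with $L_k$ arises.
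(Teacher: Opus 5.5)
Your proof is correct and uses the same key idea as the paper: factoriality of $L_k$ guarantees that every partial concatenation of an allowed word is allowed, so every bracketing gives the same value. The only difference is the order of the argument. You prove the $n$-factor statement for arbitrary bracketings and then specialize to $n=3$ to get associativity, whereas the paper checks the three-factor case directly and then inducts on the number of factors.
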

\begin{proof}
If $p,q,s$ are nonzero and $pqs\in L_k$, factoriality implies
$pq,qs\in L_k$, so both bracketings equal $pqs$. If $pqs\notin L_k$,
each bracketing is zero: either its first multiplication is already
zero, or the second multiplication rejects the complete word.
If an operand is zero both bracketings are zero. This proves
associativity. Induction on the number of factors proves the rest:
factoriality ensures every partial concatenation of an allowed complete
word is allowed, while a forbidden factor cannot occur in an allowed
complete word.
\end{proof}

\begin{proposition}[Sapir; Auinger--Dolinka--Volkov]\label{prop:sapir}
For the parameter $k$, alphabet, morphism and factor semigroup just
defined, put $X_k=\{x_1,\ldots,x_k\}$. Let $u,v\in X_k^+$.
If a semigroup homomorphism $\phi:X_k^+\longrightarrow V_k^0$
satisfies $\phi(u)\ne\phi(v)$, then the single identity $u\approx v$
entails, in the class of ordinary semigroups, an identity
$Z_{k+1}\approx q$ with $q\ne Z_{k+1}$ as literal words.
\end{proposition}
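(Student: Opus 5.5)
We plan to follow the strategy of Sapir and of Auinger, Dolinka and Volkov: from a separating homomorphism we extract a nonerasing substitution $\theta$ that places $u$ inside a Zimin word, and then we apply $u\approx v$ at that spot. By symmetry we may assume $\phi(u)\neq 0$, so that $\phi(u)\in L_k$. By Lemma~\ref{lem:factorproduct}, $\phi(u)$ is then the literal concatenation of the words $\phi(x)$ for the letters $x$ of $u$. That concatenation is a factor of some $\gamma^m(a_{11})$. The value $\phi(v)$ is either $0$, or a word of $L_k$ different from $\phi(u)$.

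\emph{Step 1: recognizability of $\gamma$.} Every image $\gamma(a_{ij})$ has length $r$. Its second indices run through $1,2,\dots,r$. Its first indices alternate between $i$ (odd positions) and $j$ (even positions). Hence every sufficiently long factor of a word in $L_k$ has a unique cutting into $\gamma$-blocks, and the letter $a_{ij}$ producing each full block is determined. We will prove the following desubstitution statement. If $w\in L_k$ is the image of $u$ under a homomorphism into $L_k$, and $w$ is long relative to $r$, then after trimming boundary blocks, $w$ is the $\gamma$-image of a word in $L_k$. That preimage word is itself a value of a suitable homomorphism applied to a word obtained from $u$. This is where the choice $r=6k+2$ enters: with $k$ variables, each variable image is cut by block boundaries in boundedly many ways. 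So $r$ large compared with $k$ lets us push each variable image through $\gamma^{-1}$ while keeping all images nonempty.

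\emph{Step 2: reaching a Zimin word.} We iterate the desubstitution $m$ times, until the images of the variables are short. The word $\gamma^m(a_{11})$ has a Zimin-like structure: the first-index pattern $i\,j\,i\,j\cdots$ of $\gamma(a_{ij})$ makes consecutive levels nest like $Z_{t+1}=Z_tx_{t+1}Z_t$. The plan is to show that the desubstituted occurrence of $u$ becomes an occurrence of $\theta(u)$ as a factor $Z_{k+1}=\alpha\,\theta(u)\,\beta$. Here $\theta$ is a nonerasing substitution into $\{x_1,\dots,x_{k+1}\}^+$, and $\alpha,\beta$ are possibly empty. This is essentially the mechanism behind Zimin's unavoidability theorem. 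The obvious case split is by the number of variables of $u$ that are still ``long'' at the current level; that number drops at most $k$ times, which yields the index $k+1$.

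\emph{Step 3: nontriviality.} Substituting $\theta$ into $u\approx v$ and multiplying by the contexts $\alpha,\beta$ gives $Z_{k+1}\approx q$ with $q=\alpha\,\theta(v)\,\beta$. We must check that $q\neq Z_{k+1}$ literally, that is, $\theta(u)\neq\theta(v)$. For this we keep track of $\phi$ along the desubstitution. The last map is a homomorphism $\psi$ with $\psi(u)\neq\psi(v)$, and $\psi$ factors through $\theta$ followed by a homomorphism into $V_k^0$. Literal equality $\theta(u)=\theta(v)$ would then force $\phi(u)=\phi(v)$, a contradiction. The main obstacle is Step~1 together with its compatibility with Step~3. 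We must show that the block cutting never splits a variable image so as to make it empty, and that the desubstituted homomorphisms still separate $u$ from $v$, including when $\phi(v)=0$. For the latter we will use that $L_k$ is closed under $\gamma$ and under desubstitution: a forbidden factor in $\phi(v)$ must remain forbidden in its preimage.
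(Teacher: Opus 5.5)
The paper does not prove this statement. It imports it as Proposition~2.5 of Auinger, Dolinka and Volkov, which is Sapir's theorem, so your sketch has to stand on its own, and it does not. Your endgame is the right one. Suppose you had a substitution $\theta$ into $\{x_1,\dots,x_{k+1}\}^+$ with $\theta(u)$ a factor of $Z_{k+1}$, and a morphism $\rho$ with $\phi(x)=\rho(\theta(x))$ for every $x\in\con(u)\cup\con(v)$. Then $\phi(u)\ne\phi(v)$ would force $\theta(u)\ne\theta(v)$, and you would be done. But this factorization lemma is the entire content of Sapir's result, and Steps~1--2 do not prove it.

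In Step~1 the cutting is not the issue. The second index of the first letter of $\phi(x)$ fixes its residue modulo $r$, so each image has exactly one cutting, not ``boundedly many''. The real issue is that the cuts generally fall strictly inside variable images, and images containing no complete block have no $\gamma$-preimage at all. So the desubstituted word is not a value of $u$, nor of any word in at most $k$ variables that you control. You would need new letters for prefixes and suffixes of images, and for junction blocks assembled from several short images. Trimming the end blocks also severs the link with $u$ at both ends. The phrase ``a word obtained from $u$'' hides exactly this. Nothing in the sketch bounds the new number of variables or records enough to rebuild $\theta$ on the original variables. The parameter $r=6k+2$, which is where this must be controlled, is never actually used.

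Step~2 is only asserted. You have not shown that $\gamma^m(a_{11})$ nests like $Z_{t+1}=Z_tx_{t+1}Z_t$. Counting how often the number of long variables drops yields neither $\theta$ nor the index $k+1$. A soft argument cannot work here. Take $k=1$, $u=x_1^2$, $v=x_1^3$: $Z_2$ is square-free, so no move applies to it, and the proposition therefore forces $L_1$ to be square-free. More generally it forces $L_k$ to avoid every avoidable pattern in at most $k$ variables. That depends on the specific alternating-index design of $\gamma$, and none of it appears in your plan.

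Step~3 has a gap of its own. Your desubstitution is driven by $\phi(u)$ alone, so it says nothing about variables in $\con(v)\setminus\con(u)$, and the proposition does not assume regularity. The ``forbidden factor stays forbidden'' device is not meaningful either, since $\phi(v)$ need not admit a block decomposition compatible with the trimming done on $\phi(u)$. Once you have the full factorization on $\con(u)$, handle non-regular identities separately by reassigning a variable that occurs on one side only. Then use $\phi=\rho\circ\theta$ directly, with no tracking of $\phi(v)$.
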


This is Proposition~2.5 of Auinger, Dolinka and Volkov
\cite{ADV2012}, stated here in the notation used below.  In particular,
the proposition applies to every failed identity involving at most $k$
variables, after renaming its variables into $X_k$.

\begin{lemma}\label{lem:return}
For each $c\in A_k$, the word $\gamma^3(c)$ contains every letter
of $A_k$. For every $w\in L_k$ there is a nonempty $d\in L_k$
such that $wdw\in L_k$.
\end{lemma}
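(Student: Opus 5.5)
Both claims follow from the shape of~\eqref{eq:gamma}: $\gamma(a_{ij})$ has length $r$, its odd positions carry first index $i$ and its even positions carry first index $j$, and the second indices run through $1,2,\ldots,r$.

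\textbf{Step 1: every letter occurs in $\gamma^3(c)$.} Take $c=a_{ij}$. Then $\gamma(c)$ contains the letters $a_{i1}$ and $a_{j2}$. Hence $\gamma^2(c)$ contains $\gamma(a_{i1})$, which in turn contains $a_{i,2l-1}$ and $a_{1,2l}$ for every $l$. Applying $\gamma$ once more, $\gamma^3(c)$ contains $\gamma(a_{i,2l-1})$ and $\gamma(a_{1,2l})$. The first of these contains $a_{2l-1,s}$ for every even $s$ (taken from the even positions of that image). The second contains $a_{2l,s}$ for every even $s$. This covers all first indices, but only even second indices, so the odd second indices still need to be handled. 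For these, use the odd positions: $\gamma(a_{pq})$ contains $a_{p,s}$ for every odd $s$. So it suffices that $\gamma^2(c)$ contains letters $a_{p,\ast}$ with every possible first index $p$. This holds because $\gamma^2(c)\supseteq\gamma(a_{j2})\ni a_{j,\text{odd}},a_{2,\text{even}}$, and $\gamma(a_{i1})\ni a_{1,2l}$ only gives first index $1$. So the first-index bookkeeping needs care. The cleaner route is to argue directly: $\gamma^2(c)$ contains $a_{1,m}$ and $a_{2,m}$ for all $m$, since $\gamma(c)$ has letters with second indices $1$ and $2$ at consecutive positions. Their images $\gamma(a_{1,m})$ and $\gamma(a_{2,m})$ then contain $a_{1,s}$ (odd $s$) and $a_{m,s}$ (even $s$). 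As $m$ varies this yields every even-second-index letter, and odd second indices are reached by applying the same step to the even-position first index. The parity bookkeeping is routine but is where errors creep in, so I will tabulate it carefully.

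\textbf{Step 2: the return property.} Let $w\in L_k$. By definition $w$ is a factor of $\gamma^m(a_{11})$ for some $m$. Every letter $a_{pq}$ has image of length $r\ge 2$, so $|\gamma^m(a)|$ grows, and $w$ is a factor of $\gamma^m(a)$ for some single letter $a$ or of $\gamma^m(ab)$ for a two-letter factor $ab\in L_k$. The two-letter case is the main obstacle. To handle it, choose an occurrence of a two-letter factor $ab$ of $\gamma^{M}(a_{11})$ with $w$ a factor of $\gamma^{m}(ab)$, where $M$ is large. Then find a second occurrence of $ab$ inside some $\gamma^{3}(e)$ further along. Since $\gamma(a_{pq})$ begins with $a_{p1}$ and ends with $a_{q r}$, two-letter factors across image boundaries have the form $a_{qr}a_{p'1}$. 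I would show that all two-letter factors of $L_k$ already appear inside $\gamma^{3}(e)$ for every $e$, refining Step 1 from letters to two-letter factors.

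Granting this, take $u=\gamma^{M}(a_{11})$ with $M$ large enough that $u$ contains $ab$ at two disjoint, non-adjacent positions; this is possible because $\gamma^{M}(a_{11})$ is a concatenation of blocks $\gamma^3(e)$, each containing $ab$. Then $\gamma^{m}(u)\in L_k$ contains $\gamma^{m}(ab)\,\gamma^{m}(y)\,\gamma^{m}(ab)$ for the nonempty gap word $y$. Inside it, $w$ appears in the first and second copies at the same offset. The factor running from the start of the first occurrence of $w$ to the end of the second occurrence has the form $wdw$, where $d$ is nonempty because the gap is nonempty. By factoriality, $wdw\in L_k$ and $d\in L_k$.
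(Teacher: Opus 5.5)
Your plan for the second claim (two copies of a block containing $w$, separated by a nonempty gap) is sound, but both steps have genuine gaps.

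\textbf{Step 1 is not completed, and your ``cleaner route'' uses a false claim.} It is not true that $\gamma^2(c)$ contains $a_{1,m}$ and $a_{2,m}$ for all $m$. The odd positions of $\gamma(a_{ij})$ carry first index $i$, so the letters $a_{i1},a_{j2}$ give you $a_{1,m},a_{2,m}$ only for even $m$. For example, every odd-column letter of $\gamma^2(a_{33})$ has first index $3$, so $a_{11}$ does not occur in $\gamma^2(a_{33})$.

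The easy observation your first attempt was missing is this. For every $p$, $\gamma(c)$ contains a letter with second index $p$, and the image of such a letter has $a_{pq}$ at every even position $q$.
\begin{itemize}
\item Hence $\gamma^2(c)$ contains every even-column letter, in particular $a_{p2}$ for all $p$.
\item The images $\gamma(a_{p2})$ then give all odd-column letters in $\gamma^3(c)$.
\item $\gamma^2(c)$ also contains letters of every second index, so the same observation gives all even-column letters in $\gamma^3(c)$.
\end{itemize}

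\textbf{In Step 2 the two-letter case is an obstacle you created, and your workaround is false.} You claim every two-letter factor of $L_k$ lies inside $\gamma^3(e)$ for every $e$. Take $a_{21}a_{32}$, which is in $L_k$: it occurs in $\gamma^4(a_{11})$ inside the image of $a_{23}$. In any $\gamma(u)$, a two-letter factor with second indices $1,2$ must occupy positions $1,2$ of a single image $\gamma(a_{ij})=a_{i1}a_{j2}\cdots$. So $a_{21}a_{32}$ occurs in $\gamma^3(a_{11})$ only if $a_{23}$ occurs in $\gamma^2(a_{11})$. But the letters of $\gamma^2(a_{11})$ are exactly $a_{1q}$ with $q$ odd and $a_{pq}$ with $q$ even, so $a_{23}$ is not among them.

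None of this is needed. By definition $w$ is a factor of $\gamma^m(a_{11})$, the image of a single letter, so you only need recurrence of $a_{11}$. Write $\gamma(a_{11})=c_1\cdots c_r$. By Step 1, each block of
\[
\gamma^{m+4}(a_{11})=\gamma^{m+3}(c_1)\cdots\gamma^{m+3}(c_r)
\]
contains $\gamma^m(a_{11})$, hence $w$. Take occurrences of $w$ in the first and third blocks. This gives $wdw$, where $d$ contains the whole second block, so $d$ is nonempty and lies in $L_k$ by factoriality. This is the paper's argument, and your gap-word construction reduces to it once you use the single letter $a_{11}$.
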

\begin{proof}
For each $p\in\{1,\ldots,r\}$, the word $\gamma(c)$ contains
a letter whose second index is $p$. Its image contains $a_{pq}$
at every even position $q$. Hence $\gamma^2(c)$ contains every
even-column letter, in particular $a_{p2}$ for every $p$.
The image of $a_{p2}$ contains $a_{pq}$ at every odd position $q$,
so $\gamma^3(c)$ contains every odd-column letter. To check the
even columns as well, choose any letter $e$ occurring in
$\gamma(c)$. The factor $\gamma^2(e)$ occurs in $\gamma^3(c)$,
and the preceding even-column argument applies to $e$. Thus all
letters occur in $\gamma^3(c)$.

Choose $m\ge1$ so that $w$ is a factor of $\gamma^m(a_{11})$.
Since $a_{11}$ occurs in $\gamma^3(c)$ for every $c$, the word
$\gamma^{m+3}(c)$ contains $\gamma^m(a_{11})$, hence $w$.
Write $\gamma(a_{11})=c_1\cdots c_r$. Then
\[
 \gamma^{m+4}(a_{11})
 =\gamma^{m+3}(c_1)\cdots\gamma^{m+3}(c_r).
\]
As $r\ge8$, the first three blocks exist. Choose an occurrence of
$w$ entirely within the first block and another entirely within the
third. They are disjoint. The intervening word $d$ contains the
whole second block, of length $r^{m+3}>0$, so $d$ is nonempty.
The factor spanning the two occurrences is $wdw$. Both it and its
nonempty factor $d$ lie in $L_k$. This argument holds for every
$k\ge1$ and every length of $w$.
\end{proof}

On $L_k\cup\{0\}$ define the flat addition
\begin{equation}\label{eq:flat}
 p+p=p,\qquad p+q=0\quad\text{if }p\ne q,
\end{equation}
and write $F_k=(L_k\cup\{0\},+,\cdot)$.

\begin{lemma}\label{lem:flat}
The algebra $F_k$ is an infinite ai-semiring generated by the $r^2$
letters of $A_k$.
\end{lemma}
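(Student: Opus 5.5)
We check the ai-semiring axioms for $F_k$ directly, then finiteness of generation and infiniteness.

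First, $(F_k,+)$ is a commutative idempotent semigroup: the flat addition~\eqref{eq:flat} is the standard flat semilattice with top $0$. Commutativity and idempotence are immediate. For associativity, $p+q+s$ under either bracketing equals $p$ if $p=q=s\ne0$ and equals $0$ otherwise. Second, $(F_k,\cdot)$ is a semigroup by Lemma~\ref{lem:factorproduct}.

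The substantive step, and the only place any care is needed, is distributivity $p(q+s)=pq+ps$; the right-hand law is symmetric. We split into cases.
\begin{itemize}[leftmargin=*]
\item If $q=s$, both sides equal $pq$ by idempotence.
\item If $q\ne s$, the left side is $p\cdot0=0$, so we must show $pq+ps=0$. Suppose not. Then $pq=ps\ne0$, which forces $p,q,s\in L_k$ and the literal equality $pq=ps$ of words. Left cancellation in the free monoid gives $q=s$, a contradiction.
\end{itemize}
This is the same cancellativity phenomenon as in Lemma~\ref{lem:factor}. The cases where $p$, $q$ or $s$ equals $0$ are covered by this argument, or are trivial since $0$ is multiplicatively absorbing and additively absorbing.

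For generation, note that every element of $L_k$ is a nonempty word over $A_k$. By Lemma~\ref{lem:factorproduct} it equals the product of its letters, since that concatenation lies in $L_k$. Each letter $a_{ij}$ lies in $L_k$, because by Lemma~\ref{lem:return} it occurs in $\gamma^3(a_{11})$. Finally, $0=a+b$ for any two distinct letters $a\ne b$. Hence the $r^2$ letters generate $F_k$.

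For infiniteness, $\gamma^m(a_{11})$ has length $r^m$ and lies in $L_k$ for every $m\ge1$. So $L_k$ contains words of unbounded length and is infinite.
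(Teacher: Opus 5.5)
Your proposal is correct and follows essentially the same route as the paper. It uses the same case analysis for associativity of the flat addition, and distributivity via literal cancellation in the free monoid when $pq=ps\ne0$, with the other distributive law by symmetry. Generation comes from the letters via Lemma~\ref{lem:return}, factoriality, and $a+b=0$ for distinct letters, and infiniteness from $|\gamma^m(a_{11})|=r^m$.
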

\begin{proof}
Multiplication is associative by Lemma~\ref{lem:factorproduct}.
Addition is commutative and idempotent by definition, and
$0+p=0$ for every $p$, including $p=0$.
For associativity, if $p=q=s$, both bracketings give that common
value. Otherwise, suppose first that $p\ne q$; then
$(p+q)+s=0$. If $q\ne s$, the other bracketing is $p+0=0$;
if $q=s$, it is $p+q=0$. The case $p=q$ but $q\ne s$
follows by the same direct computation. This covers the entire
carrier, including zero.

For left distributivity, if $p=q$, then
$s(p+q)=sp=sp+sp$. If $p\ne q$, then $s(p+q)=s0=0$.
Equality $sp=sq\ne0$ would force $s,p,q$ to be nonzero words,
and literal left cancellation would give $p=q$, a contradiction.
Therefore $sp,sq$ are either distinct, with flat sum zero, or both
zero, again with sum zero. This proves left distributivity in all
cases. For right distributivity, use the same two cases and literal
right cancellation if $ps=qs\ne0$.

Every letter of $A_k$ belongs to $L_k$ by Lemma~\ref{lem:return}.
Every word of $L_k$ is the product of its letters, because each
prefix remains in the factorial language. Two distinct letters exist
and have sum zero. Thus these $r^2$ letters generate the whole
algebra without a named constant. Finally, $\gamma^m(a_{11})$
itself belongs to $L_k$ and has length $r^m$. These lengths are
unbounded, proving that $F_k$ is infinite.
\end{proof}

\section{A guarded identity and transfer at nonzero values}

\begin{lemma}[The guarded identity]\label{lem:guarded}
Let $s,t$ be nonempty words with
$|\con(s)\cup\con(t)|\le d$ and
$\B\models s\preccurlyeq t$. If $z$ is a fresh variable, then
\begin{equation}\label{eq:guarded}
 V_k^0\models tzt\approx szt\qquad\text{whenever }k\ge d+1.
\end{equation}
\end{lemma}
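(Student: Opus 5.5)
The plan is to argue by contradiction through Proposition~\ref{prop:sapir}, and to use the minimality of Zimin words (Lemmas~\ref{lem:zimin} and~\ref{lem:factor}) to rule out the resulting derivation.

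\emph{Setup.} By Lemma~\ref{lem:regular}, $\con(s)=\con(t)$. Hence the identity $tzt\approx szt$ involves at most $d+1\le k$ variables, and we may rename them into $X_k$. From $\B\models s+t\approx t$, multiplying on the right by $zt$ and using distributivity gives
\[
 \B\models szt\preccurlyeq tzt .
\]
Suppose some homomorphism $\phi$ into $V_k^0$ has $\phi(tzt)\ne\phi(szt)$. Proposition~\ref{prop:sapir} then says that $tzt\approx szt$ entails an identity $Z_{k+1}\approx q$ with $q\ne Z_{k+1}$. So there is an elementary derivation
\[
 Z_{k+1}=w_0,\ w_1,\ \dots,\ w_n=q,
\]
in which each step replaces a factor $\sigma(L)$ by $\sigma(R)$, with $\{L,R\}=\{tzt,szt\}$ and $\sigma$ a nonerasing substitution. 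Discard trivial steps, so that $w_1\ne w_0$.

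\emph{First step, easy direction.} If $w_0\to w_1$ replaces $\sigma(tzt)$ by $\sigma(szt)$, then multiplying by the contexts as in the proof of Lemma~\ref{lem:factor} gives $\B\models w_1\preccurlyeq w_0$. Minimality of $Z_{k+1}$ then forces $w_1=w_0$, a contradiction.

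\emph{First step, other direction.} Otherwise $Z_{k+1}$ contains the factor $\sigma(s)\sigma(z)\sigma(t)$ with $\sigma(s)\ne\sigma(t)$, and this factor is minimal by Lemma~\ref{lem:factor}. This is where the guard $zt$ must be used. I would first try to exploit the rigid structure of Zimin words: every factor is determined by the positions of its letters $x_i$, and $\sigma(s)$ and $\sigma(t)$ have the same content, since $\con(s)=\con(t)$ and $\sigma$ is nonerasing. I would aim to show that a factor of $Z_{k+1}$ of the form $\sigma(s)\sigma(z)\sigma(t)$ with $\B\models\sigma(s)\preccurlyeq\sigma(t)$ forces $\sigma(s)=\sigma(t)$. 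One route is to evaluate in $\B$ an assignment that separates $\sigma(s)$ from $\sigma(t)$, for instance via the entries $cd$, $dc$ or $1$, while keeping the occurrence of $\sigma(t)$ after $\sigma(z)$ nonzero. If this works, the first step is trivial in either direction, which is a contradiction.

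\emph{Fallback.} If the purely syntactic route fails, I would instead split according to the value of $\phi(tzt)$ and argue directly in $V_k^0$. When $\phi(tzt)=w\phi(z)w\ne0$, with $w=\phi(t)$, or when $\phi(szt)\ne0$, I would use the return property of Lemma~\ref{lem:return} to build from $\phi$ an assignment of a Zimin word in which the two sides differ, and then reach the same minimality contradiction.

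I expect this second direction to be the main obstacle. The point of the guard $zt$ is exactly that $s\approx t$ itself is not forced, so it is the only place where the factor $\sigma(z)\sigma(t)$ to the right of $\sigma(s)$ has to do real work.
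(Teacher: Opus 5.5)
\textbf{There is a genuine gap.} Your overall architecture matches the paper's: argue by contradiction through Proposition~\ref{prop:sapir}, realize the entailment of $Z_{k+1}\approx q$ as a chain of elementary moves, and show that every move starting at $Z_{k+1}$ is trivial. Your \emph{easy direction} is correct. The second direction, which you flag as the main obstacle, is never carried out. Neither the separating assignment in $\B$ nor the fallback through the return property is made precise. So the proof stops at exactly the step that matters.

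The missing idea is short. In a move that rewrites a factor $\sigma(s)\sigma(z)\sigma(t)$ of $Z_{k+1}$, apply minimality neither to $Z_{k+1}$ nor to the whole factor, but to its suffix $\sigma(t)$.
\begin{itemize}
\item $\sigma(t)$ is a nonempty factor of $Z_{k+1}$, so it is minimal for $\B$ by Lemmas~\ref{lem:zimin} and~\ref{lem:factor}.
\item Substituting into the hypothesis gives $\B\models\sigma(s)\preccurlyeq\sigma(t)$.
\item Minimality of $\sigma(t)$ then forces the literal equality $\sigma(s)=\sigma(t)$, so the move changes nothing.
\end{itemize}
This is precisely the role of the guard $zt$. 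In either direction of a move, $\sigma(t)$ occurs inside $Z_{k+1}$, as a suffix of $\sigma(tzt)$ or of $\sigma(szt)$. Hence one argument disposes of both directions, and your separate use of minimality of $Z_{k+1}$ becomes unnecessary. For the unguarded identity $t\approx s$, a move would only require $\sigma(s)$ to be a factor, and minimality of $\sigma(s)$ gives no information from $\sigma(s)\preccurlyeq\sigma(t)$. The return property (Lemma~\ref{lem:return}) is not needed here; the paper uses it only afterwards, in the transfer lemma.

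A smaller point concerns your setup. Lemma~\ref{lem:regular} applied to $s+t\approx t$ yields only $\con(s)\subseteq\con(t)$, not equality, and equality can fail. Every square $y^2\in\{0,1,cd,dc\}$ lies entrywise below the identity matrix, and Boolean matrix multiplication is monotone. Hence $xy^2x$ lies entrywise below $x^2$, which gives $\B\models xx\preccurlyeq xyyx$. Your variable count does not need the claim, since the hypothesis $|\con(s)\cup\con(t)|\le d$ already bounds the identity by $d+1\le k$ variables. However, the content-based reasoning you sketch for the second direction should not rely on $\con(\sigma(s))=\con(\sigma(t))$.
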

\begin{proof}
We first show that the single semigroup identity
$\eta:\ tzt\approx szt$ entails no nontrivial identity
$Z_m\approx q$, for any $m\ge1$.

Here is an explicit justification of the replacement form of this
entailment. Let $\Omega$ be a countably infinite alphabet containing
all variables concerned. On $\Omega^+$ allow elementary moves
\[
 \alpha\theta(tzt)\beta
 \ \longleftrightarrow\ \alpha\theta(szt)\beta,
 \qquad \alpha,\beta\in\Omega^*,
\]
where $\theta$ is nonerasing. Two words are related if they are
connected by a finite sequence of such moves, including a sequence
of length zero. This is an equivalence relation stable under left
and right concatenation, so it is a semigroup congruence.
Its quotient satisfies $\eta$: for any assignment of the finitely
many variables in $\eta$ to quotient classes, choose a nonempty
word representative of each class. These representatives define
$\theta$, and one elementary move identifies the two evaluations.
Consequently, if $\eta$ entails $Z_m\approx q$ in all semigroups,
it holds in this quotient under the assignment sending a variable to
its one-letter class. Thus a finite sequence of moves connects
$Z_m$ to $q$.

Consider any elementary move starting at the literal word $Z_m$.
In one direction, $\theta(tzt)$ is a factor of $Z_m$; in the
other, $\theta(szt)$ is a factor of $Z_m$. Both contain the same
nonempty suffix $\theta(t)$. Lemmas~\ref{lem:zimin}
and~\ref{lem:factor} make this suffix minimal for $\B$.
Substitution into the original inequality gives
\[
 \B\models\theta(s)\preccurlyeq\theta(t).
\]
Minimality forces literal equality $\theta(s)=\theta(t)$.
The two possible replaced words are therefore literally equal, and
the move leaves $Z_m$ unchanged. Induction along a finite sequence
of moves shows that no such sequence leaves $Z_m$. Hence the
entailed identity cannot have $q\ne Z_m$.

The union of the variable sets of $tzt$ and $szt$ has size
at most $d+1\le k$. If~\eqref{eq:guarded} failed, then
Proposition~\ref{prop:sapir} would give a nontrivial entailed
identity $Z_{k+1}\approx q$, contrary to the preceding argument.
\end{proof}

\begin{lemma}[Transfer at a nonzero value]\label{lem:transfer}
Suppose $\B\models s\preccurlyeq t$, where $s,t\in Y^+$ and
$|Y|\le k-1$. Let $f:Y^+\longrightarrow A_k^+$ be a morphism
with $f(y)\in L_k$ for every $y\in Y$. If $f(t)\in L_k$,
then $f(s)=f(t)$ as literal words.
\end{lemma}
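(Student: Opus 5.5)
We derive the statement from the guarded identity, Lemma~\ref{lem:guarded}, together with the return property, Lemma~\ref{lem:return}. Since $|\con(s)\cup\con(t)|\le|Y|\le k-1$, Lemma~\ref{lem:guarded} applies with $d=k-1$. Taking a fresh variable $z\notin Y$, it gives
\[
 V_k^0\models tzt\approx szt .
\]
The plan is to evaluate both sides of this identity under a suitable extension of $f$ to a homomorphism into $V_k^0$. We arrange the value of $z$ so that the left side is a nonzero word of $L_k$. The identity then forces the right side to be the same nonzero word, and literal cancellation in the free monoid yields $f(s)=f(t)$.

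First, we set $w=f(t)\in L_k$. By Lemma~\ref{lem:return} there is a nonempty $e\in L_k$ with $wew\in L_k$; we assign $z\mapsto e$. Next, we define the homomorphism $\phi:(Y\cup\{z\})^+\to V_k^0$ by $\phi(y)=f(y)$ for $y\in Y$ and $\phi(z)=e$. Every generator is sent to a nonzero element of $L_k$. By Lemma~\ref{lem:factorproduct}, the value of any word $u$ under $\phi$ is the literal concatenation of the images of its letters when that concatenation lies in $L_k$, and is $0$ otherwise. Hence $\phi(u)$ equals the literal word $f'(u)$ whenever $f'(u)\in L_k$, where $f'$ is the morphism extending $f$ by $z\mapsto e$.

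Now we evaluate the left side. Its literal concatenation is $f'(tzt)=wew$, which lies in $L_k$, so $\phi(tzt)=wew\neq0$. The identity gives $\phi(szt)=wew\ne0$. By Lemma~\ref{lem:factorproduct}, a nonzero value is the literal concatenation, so $f(s)\,e\,w=w\,e\,w$ as words in $A_k^+$. Right cancellation of the common suffix $ew$ in the free monoid then gives $f(s)=w=f(t)$.

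We expect the main obstacle to be the bookkeeping rather than any single hard step. One point is to check that $k\ge d+1$ holds with $d=|Y|\le k-1$, which accounts for the fresh variable $z$. The other is to be careful that the nonzero value of a product in $V_k^0$ really coincides with the literal concatenation, so that the conclusion is an equality of words and not merely of elements. Both points follow directly from Lemma~\ref{lem:factorproduct} and the hypotheses $f(y)\in L_k$ and $f(t)\in L_k$. Factoriality of $L_k$ is not needed beyond what is already packaged in that lemma.
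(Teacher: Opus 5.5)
Your proposal is correct and follows the paper's proof essentially step for step: you choose $e$ with $wew\in L_k$ via Lemma~\ref{lem:return}, evaluate the guarded identity $tzt\approx szt$ from Lemma~\ref{lem:guarded} with $z\mapsto e$, and cancel the common suffix $ew$ in the free monoid. Renaming the return word to $e$ avoids a clash with the bound $d$ in Lemma~\ref{lem:guarded}, and your explicit check that $d=k-1$ satisfies $k\ge d+1$ only makes the paper's argument more explicit.
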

\begin{proof}
Put $w=f(t)$ and use Lemma~\ref{lem:return} to choose a nonempty
$d\in L_k$ with $wdw\in L_k$. By Lemma~\ref{lem:guarded},
$V_k^0\models tzt\approx szt$. Assign $y$ to the element
$f(y)$ and $z$ to $d$. Lemma~\ref{lem:factorproduct} gives the
nonzero value $wdw$ on the left. Equality forces the right side
to be nonzero as well. Only now identify its value, by the same
lemma, with the literal concatenation $f(s)dw$. Hence
\[
 f(s)dw=wdw.
\]
Right cancellation in the free monoid $A_k^*$ gives $f(s)=w$.
No cancellation property of $V_k^0$ is used. The conclusion also
shows that $f(s)\in L_k$.
\end{proof}

\section{Prefix-code reduction}

A set $D$ of nonempty words is a \emph{prefix code} if no member
is a proper prefix of another member. Write $D^+$ for all
nonempty concatenations of members of $D$.

\begin{lemma}\label{lem:prefix}
Let $C$ be a finite nonempty set of nonempty words in a factorial
language $L$. There is a finite nonempty prefix code $D\subseteq L$
such that $|D|\le|C|$ and $C\subseteq D^+$.
\end{lemma}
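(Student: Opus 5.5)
I will argue by induction on $|C|$, or more concretely on the quantity $|C|$ together with the total length $\sum_{w\in C}|w|$, and repeatedly replace a non-prefix-free set by a smaller or shorter one whose generated semigroup still contains the original words. If $C$ is already a prefix code, take $D=C$. Otherwise there are $u,v\in C$ with $u$ a proper prefix of $v$. Write $v=uv'$ with $v'$ nonempty. Since $v\in L$ and $L$ is factorial, $v'\in L$.

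Put $C'=(C\setminus\{v\})\cup\{v'\}$. Then $C'$ is a finite nonempty set of nonempty words of $L$, and $|C'|\le|C|$. Strict inequality occurs when $v'$ already lies in $C$. Moreover, $v=uv'\in C'^+$ because $u\in C'$; note $u\neq v$, so $u$ survives the deletion. Hence $C\subseteq C'^+$. The total length $\sum_{w\in C'}|w|$ is strictly smaller than that of $C$, since $v$ was replaced by the strictly shorter word $v'$, or simply removed. The induction is therefore on total length, a nonnegative integer. By the induction hypothesis there is a prefix code $D\subseteq L$ with $|D|\le|C'|\le|C|$ and $C'\subseteq D^+$. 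Since $D^+$ is closed under concatenation, $C\subseteq C'^+\subseteq D^+$, as required.

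The only point needing care is that the process terminates and never increases cardinality. This is handled by using total length as the induction measure rather than $|C|$ alone: cardinality never increases, but it may stay the same, while total length strictly drops. A second check is that $u$ remains in $C'$, so that $v$ is still generated, and that $v'\neq\varepsilon$, which holds because the prefix is proper. Factoriality of $L$ is used exactly once, to keep $v'$ inside $L$. No other property of $L_k$ or of $\B$ is needed.
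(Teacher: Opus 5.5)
Your proof is correct and is essentially the paper's argument: the same replacement of $uv'$ by its suffix $v'$, with total length $\sum_{w}|w|$ as the strictly decreasing measure. The only difference is that you phrase the termination as strong induction on total length, whereas the paper runs it as an explicit iteration.
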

\begin{proof}
Start with the current set $S=C$. If distinct members have the
form $u$ and $uv$, with $v\ne\varepsilon$, replace $S$ by
\[
 S'=(S\setminus\{uv\})\cup\{v\}.
\]
The word $u$ remains in $S'$, since $u\ne uv$. Each old
generator belongs to $(S')^+$: only $uv$ was removed, and it
is the product of $u$ and $v$. Thus $S^+\subseteq(S')^+$,
so the invariant $C\subseteq S^+$ is preserved.

The current set stays nonempty, and its cardinality never increases.
Every new word is a nonempty suffix of an old member and therefore
still lies in $L$. The positive integer measure
$\sum_{w\in S}|w|$ strictly decreases. If $v$ is not already
in $S\setminus\{uv\}$, its decrease is $|uv|-|v|=|u|$.
If $v$ is already there, including the possibility $v=u$, its
decrease is $|uv|$. Termination follows. At termination no distinct
pair has the indicated prefix relation, so the final set is the
required prefix code $D$.

One can also track a decomposition certificate for every original
word explicitly. Initially its certificate is its own generator
symbol. Whenever $uv$ is removed, replace each occurrence of that
generator in all certificates by the symbols for $u$ followed by
$v$, identifying duplicate symbols if $v$ already occurs.
Each certificate remains nonempty and spells the same literal word.
At termination these certificates express every member of $C$
as a nonempty product of members of $D$.
\end{proof}

\begin{lemma}\label{lem:decode}
Let $D=\{d_1,\ldots,d_\ell\}$ be a finite nonempty prefix code
of distinct nonempty words over an alphabet $A$, and let
$Y=\{y_1,\ldots,y_\ell\}$. The morphism
\[
 f:Y^+\longrightarrow A^+,\qquad f(y_i)=d_i,
\]
is injective. Every morphism $h:X^+\longrightarrow A^+$ with
$h(x)\in D^+$ for all $x$ factors as $h=f\circ\sigma$, where
$\sigma:X^+\longrightarrow Y^+$ is nonerasing.
\end{lemma}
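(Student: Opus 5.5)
The plan is to prove injectivity first, by the standard unique-decipherability argument for prefix codes, and then obtain the factorization from it.

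For injectivity, suppose $f(u)=f(v)$ with $u=y_{i_1}\cdots y_{i_p}$ and $v=y_{j_1}\cdots y_{j_q}$ nonempty. I will argue by induction on $p+q$. Both $d_{i_1}$ and $d_{j_1}$ are prefixes of the common word $f(u)=f(v)$, so one of them is a prefix of the other. Since $D$ is a prefix code, neither can be a proper prefix of the other, hence $d_{i_1}=d_{j_1}$. As the listed words are distinct, $i_1=j_1$.

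Left cancellation in the free monoid $A^*$ then gives $f(y_{i_2}\cdots y_{i_p})=f(y_{j_2}\cdots y_{j_q})$, where either side may now be the empty word. If one remainder is empty, the other must be empty too, because $f$ sends every nonempty word to a nonempty word (all $d_i$ are nonempty). Hence $p=q=1$ and $u=v$. Otherwise the induction hypothesis applies to the shorter remainders. I expect this to be the only point needing care: the empty-remainder case has to be handled explicitly because $f$ is only defined on $Y^+$. The cleanest way is to extend $f$ to $Y^*$ with $\varepsilon\mapsto\varepsilon$ and note that $f(w)=\varepsilon$ only when $w=\varepsilon$.

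For the factorization, take any variable $x$. Since $h(x)\in D^+$, I choose a nonempty word $\sigma(x)\in Y^+$ with $f(\sigma(x))=h(x)$, by writing $h(x)$ as a concatenation $d_{i_1}\cdots d_{i_p}$ and setting $\sigma(x)=y_{i_1}\cdots y_{i_p}$. By injectivity this $\sigma(x)$ is uniquely determined, though uniqueness is not actually needed for existence. I then extend $\sigma$ to the morphism $X^+\to Y^+$ determined by these letter images; it is nonerasing because each $\sigma(x)$ is nonempty.

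Finally, $f\circ\sigma$ and $h$ are both semigroup morphisms $X^+\to A^+$ that agree on every generator $x$, so they coincide, giving $h=f\circ\sigma$.
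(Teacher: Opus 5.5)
Your proposal is correct and follows essentially the same route as the paper. For injectivity you compare first codewords, use prefix-freeness, cancel in the free monoid, and rule out one sequence running out before the other. For the factorization you decode each $h(x)$ into codewords and replace them by the corresponding letters $y_i$. Your induction on $p+q$ is just a formalized version of the paper's ``cancel and repeat'' argument.
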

\begin{proof}
In two equal nonempty concatenations of codewords, one first
codeword is a prefix of the other. Prefix-freeness makes them equal.
Cancel this common nonempty word and repeat. If exactly one
sequence of codewords were exhausted first, a nonempty concatenation
of nonempty words would equal the empty word, which is impossible.
Both sequences therefore exhaust together with the same successive
codewords, proving injectivity.

For each $x$, take a decomposition of $h(x)$ into members of
$D$ and replace these by their corresponding $y_i$ symbols.
This gives a nonempty word $\sigma(x)$; its extension to the free
semigroup satisfies $h=f\circ\sigma$. In particular the
certificates in Lemma~\ref{lem:prefix} supply such decompositions
when that lemma is used. No injectivity of $h$ is asserted.
\end{proof}

\section{Arbitrary semiring identities and the main theorem}

\begin{lemma}\label{lem:flatsum}
In $F_k$, a finite nonempty sum has nonzero value if and only if
all its summands have the same nonzero value. In that case their
common value is the value of the sum.
\end{lemma}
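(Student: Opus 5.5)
The plan is to argue by induction on the number $N\ge1$ of summands, using only the flat addition~\eqref{eq:flat}. Here ``sum'' means an element $p_1+\cdots+p_N$ of $F_k$, with any bracketing. By Lemma~\ref{lem:flat}, addition in $F_k$ is associative and commutative, so the bracketing and order are irrelevant.

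For $N=1$ the claim is trivial. The single summand is the value, and ``all summands have the same nonzero value'' just says it is nonzero.

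For the inductive step, write the sum as $P+p_N$, where $P=p_1+\cdots+p_{N-1}$. Suppose the total is nonzero. Since $0+q=0$ for every $q$, both $P$ and $p_N$ must be nonzero. Since $q+q'=0$ for distinct $q,q'$, they must also be equal. By the induction hypothesis, $P\ne0$ forces $p_1,\dots,p_{N-1}$ all to equal the nonzero value $P$. Hence all $N$ summands equal $P=p_N$, and this common value is the value of the sum. Conversely, if all summands equal a common nonzero $p$, then idempotence $p+p=p$ and induction give the value $p$, which is nonzero.

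There is no real obstacle here. The only point needing care is that $0$ absorbs under addition, so a single zero summand kills the sum. This is covered by the identity $0+p=0$ recorded in the proof of Lemma~\ref{lem:flat}.
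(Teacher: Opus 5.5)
Your proof is correct and follows essentially the same route as the paper: induction on the number of summands, splitting off the last summand, and using that $0$ absorbs addition and that distinct elements sum to $0$ under~\eqref{eq:flat}. Your separate treatment of the converse via idempotence just spells out what the paper compresses into the phrase ``precisely when''.
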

\begin{proof}
This holds for one summand. Inductively, if the partial sum is zero,
the full sum is zero because $0$ absorbs addition. If it is nonzero,
the next addition remains nonzero precisely when its operands are
the same nonzero element, by~\eqref{eq:flat}. The induction
hypothesis now gives the assertion for all the summands.
\end{proof}

\begin{theorem}\label{thm:identities}
For every $n\ge1$, the ai-semiring $F_{n+1}$ satisfies
$\Id_{\le n}(\B)$.
\end{theorem}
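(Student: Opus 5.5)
The plan is to take an arbitrary identity $P\approx Q$ in $\Id_{\le n}(\B)$ and an arbitrary assignment $h$ of its variables into $F_k$, with $k=n+1$. I will show that the values of $P$ and $Q$ agree. By Lemma~\ref{lem:normal} we may write $P=u_1+\cdots+u_p$ and $Q=v_1+\cdots+v_q$ as sums of nonempty words. By Lemma~\ref{lem:regular}, $\con(P)=\con(Q)$, and this common set $X_0$ has at most $n$ variables.

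\emph{Zero case.} Suppose first that some $x\in X_0$ is sent to $0$. Then $x$ occurs in some summand on each side, since the union of the contents of the summands is the content of the term. That summand evaluates to $0$ by Lemma~\ref{lem:factorproduct}, and $0$ absorbs addition, so both sides are $0$. Hence we may assume that every $h(x)$ lies in $L_k$.

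\emph{Reduction by symmetry.} It then suffices to prove one implication: if $P$ has nonzero value $w$, then $Q$ also has value $w$. Indeed, by symmetry a zero value of $P$ then forces a zero value of $Q$, and conversely. By Lemma~\ref{lem:flatsum}, if $P$ has value $w\ne 0$, then every $h(u_i)$ equals $w$, and by Lemma~\ref{lem:factorproduct} each $h(u_i)$ is a literal concatenation in $L_k$.

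\emph{Obstacle and prefix-code step.} The main obstacle is that $\B$ gives only $v_j\preccurlyeq P$, because a summand lies below the sum. This is an inequality against a sum, not against a single word, so Lemma~\ref{lem:transfer} does not apply directly. The prefix-code reduction is designed to overcome this.
\begin{enumerate}[label=(\roman*)]
\item Apply Lemma~\ref{lem:prefix} to $C=\{h(x):x\in X_0\}\subseteq L_k$. This gives a prefix code $D\subseteq L_k$ with $|D|\le n$.
\item Lemma~\ref{lem:decode} then gives an alphabet $Y$ with $|Y|\le n=k-1$, an injective morphism $f:Y^+\to A_k^+$ with $f(Y)=D$, and a nonerasing $\sigma$ with $h=f\circ\sigma$ on $X_0^+$.
\item Since $f(\sigma(u_i))=w$ for all $i$, injectivity of $f$ shows that all $\sigma(u_i)$ equal one word $t\in Y^+$, with $f(t)=w\in L_k$.
\item Substituting $\sigma$ into the valid inequality $v_j\preccurlyeq u_1+\cdots+u_p$ and using idempotence of addition gives $\B\models\sigma(v_j)\preccurlyeq t$.
\end{enumerate}

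\emph{Conclusion.} Now apply Lemma~\ref{lem:transfer} with $s=\sigma(v_j)$, this $t$, and the morphism $f$. Its hypotheses hold because $|Y|\le k-1$, $f(y)\in D\subseteq L_k$, and $f(t)\in L_k$. It yields $f(\sigma(v_j))=w$ literally, that is, $h(v_j)=w$ as a nonzero element of $F_k$. This holds for every $j$, so Lemma~\ref{lem:flatsum} gives that $Q$ has value $w$, which completes the argument.
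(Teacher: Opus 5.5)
Your proposal is correct and follows essentially the same route as the paper: normalize to sums of words, use regularity and Lemma~\ref{lem:flatsum}, then apply the prefix-code reduction and injective decoding to collapse all $\sigma(u_i)$ to a single word $t$, and finally invoke Lemma~\ref{lem:transfer} on $\B\models\sigma(v_j)\preccurlyeq t$. The differences are only cosmetic: you dispose of zero-valued variables up front, and you substitute into $v_j\preccurlyeq P$ rather than first into $P\approx Q$.
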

\begin{proof}
Fix $n\ge1$ and put $k=n+1$. Take an arbitrary identity of
$\B$ in at most $n$ variables. Lemma~\ref{lem:normal} writes
it, using the ai-semiring axioms, as
\[
 P=u_1+\cdots+u_p\ \approx\ Q=v_1+\cdots+v_q,
 \qquad p,q\ge1,
\]
with nonempty word summands. The same normalizations hold in
$F_k$ by Lemma~\ref{lem:flat}; the content of each side is
preserved. We prove equality under any assignment $\varphi$ of
the variables to $F_k$.

If both sides have value zero, they agree. Otherwise interchange
the two sides if necessary so that $\varphi(P)=w\ne0$.
Lemma~\ref{lem:flatsum} says that each $u_i$ has value $w$.
Every variable occurring in $P$ must therefore have nonzero value:
it occurs in a word summand, whose value would be zero if that
variable had value zero. Regularity, Lemma~\ref{lem:regular},
gives $\con(P)=\con(Q)$, so all relevant variables have values
in $L_k$. This step includes all assignments with zero values:
if a relevant variable were zero, a nonzero side as assumed here
could not exist.

Regard these variable values as literal words and define the
nonerasing morphism $h$ by $h(x)=\varphi(x)$. Nonzero word
evaluation and Lemma~\ref{lem:factorproduct} give
\begin{equation}\label{eq:commonwords}
 h(u_1)=\cdots=h(u_p)=w
\end{equation}
as literal words. Apply Lemma~\ref{lem:prefix} to the finite
nonempty set $C$ of distinct values $h(x)$ for the relevant
variables. Its cardinality is at most $n$, so the resulting prefix
code $D=\{d_1,\ldots,d_\ell\}\subseteq L_k$ has
$1\le\ell\le n$. Lemma~\ref{lem:decode} gives
\[
 h=f\circ\sigma,\qquad f(y_i)=d_i,
 \qquad Y=\{y_1,\ldots,y_\ell\},
\]
where $f:Y^+\longrightarrow A_k^+$ is injective and $\sigma$
is a nonerasing substitution of the original variables by words in
$Y^+$. Variables with equal original values cause no difficulty.

Apply injectivity of $f$ to~\eqref{eq:commonwords}. It gives
\[
 \sigma(u_1)=\cdots=\sigma(u_p)=t
\]
for a single literal word $t\in Y^+$. Write
$s_j=\sigma(v_j)$. Substitution into the original valid identity
and idempotence of addition yield
\[
 \B\models t\approx s_1+\cdots+s_q.
\]
For each $j$, adding $s_j$ to the right-hand sum leaves it
unchanged, by commutativity and idempotence. Thus
\[
 \B\models s_j+t\approx t,
 \qquad\text{that is,}\qquad
 \B\models s_j\preccurlyeq t.
\]
This is the order direction fixed in Section~2.

These inequalities use at most $|Y|=\ell\le n=k-1$ variables,
the images of all $y_i$ belong to $L_k$, and $f(t)=w\in L_k$.
Lemma~\ref{lem:transfer} therefore gives
\[
 h(v_j)=f(s_j)=w\qquad(1\le j\le q).
\]
Since $w\in L_k$, each $v_j$ indeed has nonzero value $w$
under $\varphi$, by Lemma~\ref{lem:factorproduct}. Hence
$\varphi(Q)=w=\varphi(P)$.

We have covered every assignment, including the possibility that
initially exactly one side might have appeared to be zero: a
nonzero side forces the other to have the same nonzero value.
There is no restriction on word lengths, the lengths of assigned
words, or the finite numbers of summands. This proves the theorem
for every original term identity with the prescribed variable bound.
\end{proof}

\begin{theorem}\label{thm:main}
For every $n\ge1$ there is an infinite ai-semiring $T_n$, generated by at most
$(6n+8)^2$ elements, such that
\[
 T_n\models\Id_{\le n}(\B).
\]
Consequently, $\B$ is inherently nonfinitely based.
\end{theorem}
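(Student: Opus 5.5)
We take $T_n=F_{n+1}$. With $k=n+1$ we have $r=6k+2=6n+8$. By Lemma~\ref{lem:flat}, $F_{n+1}$ is an infinite ai-semiring generated by the $r^2=(6n+8)^2$ letters of $A_{n+1}$. By Theorem~\ref{thm:identities}, it satisfies $\Id_{\le n}(\B)$. This settles the first assertion directly.

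For the consequence, we argue by contradiction. Suppose $\B$ lies in a locally finite variety $\mathcal V$ of ai-semirings that is finitely based, with finite basis $\Sigma$. Let $n$ be the largest number of variables occurring in an identity of $\Sigma$; we may take $n\ge1$. Each identity of $\Sigma$ holds in $\B$ and involves at most $n$ variables, so $\Sigma\subseteq\Id_{\le n}(\B)$. Note that $\Sigma$ is stated in the signature $\{+,\cdot\}$, and since $\mathcal V$ consists of ai-semirings we may assume the ai-semiring axioms lie in $\Sigma$ (they use three variables, so raise $n$ to at least $3$ if needed). Hence $T_n\models\Sigma$, and therefore $T_n\in\mathcal V$.

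Since $\mathcal V$ is locally finite, every finitely generated member of $\mathcal V$ is finite. But $T_n$ is generated by finitely many elements and is infinite, which is a contradiction. Thus no finitely based locally finite variety contains $\B$, so $\B$ is inherently nonfinitely based.

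The only point needing care is that the variable bound $n$ of the finite basis is matched by the choice $k=n+1$ in Theorem~\ref{thm:identities}. Everything else is bookkeeping: the equality $6(n+1)+2=6n+8$, and the observation that "generated without a named constant" in Lemma~\ref{lem:flat} makes $T_n$ genuinely finitely generated in the signature $\{+,\cdot\}$.
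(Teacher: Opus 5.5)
Your proposal is correct and matches the paper's proof: take $T_n=F_{n+1}$, use Lemma~\ref{lem:flat} and Theorem~\ref{thm:identities}, then note that a finite basis $\Sigma$ of a locally finite variety containing $\B$ lies in $\Id_{\le n}(\B)$ for suitable $n$, which puts the infinite, finitely generated $T_n$ into that variety. Your restriction to varieties of ai-semirings, and the addition of the ai-semiring axioms to $\Sigma$, is harmless but unnecessary: the inclusion $\Sigma\subseteq\Id_{\le n}(\B)$ holds for every variety of signature $\{+,\cdot\}$ containing $\B$, and that is the generality the paper's definition requires.
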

\begin{proof}
Take $T_n=F_{n+1}$. Lemma~\ref{lem:flat} makes it infinite and
generated by its alphabet $A_{n+1}$, whose cardinality is
\[
 |A_{n+1}|=\bigl(6(n+1)+2\bigr)^2=(6n+8)^2.
\]
Theorem~\ref{thm:identities} gives the asserted identities.

Suppose a locally finite variety $\mathcal V$ of signature
$\{+,\cdot\}$ contained $\B$ and had a finite identity basis
$\Sigma$. Choose $n\ge1$ at least as large as the number of
distinct variables in the union of both sides of each identity of $\Sigma$;
if $\Sigma$ is empty choose $n=1$. Since $\B\in\mathcal V$,
every identity in $\Sigma$ holds in $\B$. The first assertion
therefore gives $T_n\models\Sigma$, so $T_n\in\mathcal V$.
This contradicts local finiteness because $T_n$ is infinite and
finitely generated. The argument concerns every variety in the
stated signature; it does not assume that its members were initially
required to be ai-semirings. Thus $\B$ is inherently nonfinitely
based in precisely that signature.
\end{proof}

\bibliographystyle{amsplain}

\end{document}